\newtheorem{proposition}{Proposition}
\DeclareMathOperator{\vol}{\boldmath{v}}
\DeclareMathOperator{\str}{str}
\DeclareMathOperator{\tr}{tr}
\DeclareMathOperator{\im}{im}
\newcommand{\cl}{\mathit{Cl}}
\newcommand{\dotp}{\boldsymbol{\cdot}}
\title{Riemannian geometry without indices.}
\date{\today}
\author{V.V.Fock}
\author{P.Goussard}
\address{V.F. : IRMA, Universit\'e de Strasbourg, 7 rue Ren\'e Descartes 67084 Strasbourg France}
\email{fock@math.unistra.fr}
\address{P.G. : ??}
\email{pierre.goussard@ac-mayotte.fr}
\begin{document}

\maketitle

\begin{abstract}
We suggest an index-free formalism allowing to simplify many computations in Riemann geometry. The main ingredients are forms with values in a Clifford algebra and an action of the group $\mathfrak{sl}_2\times \mathfrak{sl}_2$ on such forms.

\end{abstract}

\section{Introduction} Working with Levi-Civita connection, curvature, Weyl and Ricci tensors and in particular deriving Einstein equation out of the Hilbert action is a painful struggle with indices (see for example \cite{DNF}). In the present note we suggest a version of Cartan-Palatini formalism \cite{P}\cite{C}\cite{W} allowing to make most of the computations without indices at all. The main objects for this approach are Clifford forms: differential forms with values in the Clifford algebra $\cl(\mathrm{V})$ generated by a vector space $\mathrm{V}$ provided with a nondegenerate quadratic form $\eta$. Since Clifford algebra is graded as a vector space, such differential forms are bigraded. The corresponding graded components are denoted by $\Omega^{pq}=\Omega^q(M,\cl^p(\mathrm{V}))$. The elements of such components are called $(p,q)$-forms.

Clifford forms are known to be useful to work with spinors, but here we would like to emphasize that they simplify computations even if no spinors are under consideration.

The Riemann metric on a manifold is replaced by a vielbein $\theta$  which is an (1,1)-form and and a spin connection $\omega$, which is a (2,1)-form. The torsion can be written as a (1,2)-form $t=d\theta+\omega\theta+\theta\omega$ and the curvature as a (2,2)-form $R=d\omega+\omega^2$. The Hilbert action takes in these variables the form
$$S=\str\int\theta^{n-2}(d\omega + \omega^2),$$ 
where $\str$ denotes the super-trace on the Clifford algebra. The Hilbert action is invariant under the gauge transformation $\theta\mapsto g^{-1}\theta g$, $\omega \mapsto g^{-1}dg + g^{-1}dg$ with $g$ a function on the manifold with values in the spin group considered as a subgroup of the Clifford algebra.

In presence of the vielbein $\theta$ one can define an action of the Lie algebra $\mathfrak{sl}_2\times \mathfrak{sl}_2$ on the space of Clifford forms analogous to the Lefschetz algebra $\mathfrak{sl}_2$ in topology of K\"ahler manifolds. One can define two Hodge star operator on Clifford forms acting in the fiber and in the base, respectively. The algebra $\mathfrak{sl}_2\times \mathfrak{sl}_2$ is generated by external multiplication by the form $\theta$ as well as by its conjugates by the Hodge stars.
This algebra simplifies working with the suggested formalism. In particular Einstein equation together with the first Bianchi identity in four dimensions amount to the invariance of the curvature tensor under this algebra. Which in its turn implies self-duality and symmetry of the curvature.

In the end of this paper the construction is generalized to complex manifolds where the space of Clifford forms is $\mathbb{Z}^4$ graded and the algebra $\mathfrak{sl}_2\times \mathfrak{sl}_2$ is replaced by the affine Lie algebra $\widehat{\mathfrak{sl}}_4$.

\section{Grassmann and Clifford algebras.}
In this section we recall basic notions about Grassmann (exterior) and Clifford algebras and relations between them. Then we define the algebra of Clifford forms and the action of the algebra $\mathfrak{sl}_2\times\mathfrak{sl}_2$ thereon.

A Clifford algebra $\cl(\mathrm{V})$ of a vector space $\mathrm{V}$ provided with a nondegenerate quadratic form $\eta\in \mathrm{V}^*\otimes \mathrm{V}^*$ is an algebra generated by $\mathrm{V}$ with relations $vv'+v'v=\eta(v,v')$ for any $v,v'\in \mathrm{V}$. A Grassmann algebra $\Lambda(\mathrm{V})$ of a vector space $\mathrm{V}$ is an algebra generated by $\mathrm{V}$ with relations $vv'+v'v=0$ for any $v,v'\in \mathrm{V}$. The Grassmann algebra is graded with $\deg v=1$ for any $v\in \mathrm{V}$. The corresponding graded components will be denoted by $\Lambda^{i}(\mathrm{V})$.

The Grassmann algebra $\Lambda(\mathrm{V})$ faithfully acts on the Clifford algebra  $\cl(\mathrm{V})$ by $v\wedge a\mapsto \frac12(va+(-1)^{\deg a}av)$ for any $v\in \mathrm{V}\subset \Lambda(\mathrm{V})$ and any homogeneous $a \in \cl(\mathrm{V})$. This action allows to define an isomorphism between Grassmann and Clifford algebras as modules over the Grassmann algebra. The isomorphism is fixed by the requirement that $1\mapsto 1$. Denote by $\cl^i(\mathrm{V})$ the subspaces of the Clifford algebra corresponding to the homogeneous components $\Lambda^i(\mathrm{V})$ of the Grassmann algebra with respect to this isomorphism and $\Pi^i:\cl(\mathrm{V})\to \cl^i(\mathrm{V})$ the projection on the corresponding component. 

The component $\cl^n(\mathrm{V})$ is of dimension one and contains two elements $v$ such that $v^2=1$. Choosing one of these two elements fixes the orientation of the space $V$. There is a second action of $\Lambda(V)$ given by $v\vdash a\mapsto \frac12(va-(-1)^{\deg a}av)$ and the second isomorphism corresponding of Grassmann and Clifford algebras as modules sending $1$ to $v$. Define the Hodge star operator as the automorphism $*$ of the Clifford algebra as a vector space intertwining these two isomorphisms, namely such that $v\wedge (*a)=*(v\vdash a)$. Obviously $*:\cl^i(\mathrm{V})\to \cl^{n-i}(\mathrm{V})$. The identification of $\cl(\mathrm{V})$ and $\Lambda(\mathrm{V})$ allows to consider $*$ also as an automorphism of the Grassmann algebra (as a vector space). The actions $v\wedge$ and $v\vdash$ on the Clifford algebra transferred to the Grassmann algebra are just the usual external and internal multiplication by $v$.

Observe that though the isomorphism of $\cl(\mathrm{V})$ and $\Lambda(\mathrm{V})$ is not an algebra isomorphism one can express the external and the internal product in terms of Clifford product and the projections:
\begin{align*}
x\wedge y= \Pi^{\deg x + \deg y}xy, &&
x\vdash y= \Pi^{\deg y - \deg x}xy.
\end{align*}

Given a basis in the space $V$ the Grassmann algebra can be identified with the algebra of polynomials of $n$ Grassmann variables $\xi_1,\ldots,\xi_n$ and the Hodge $*$ operator to a Grassmann Fourier transformation, see e.g. \cite{L}, (here and below we assume the Einstein summation convention over repeated indices):
$$*f(\xi_1,\ldots,\xi_n)=\int e^{\displaystyle \eta^{ij}\xi_i\zeta_j}f(\zeta_1,\ldots,\zeta_n)d\zeta_n\cdots d\zeta_1.$$ One can easily verify the properties analogous to the usual Fourier transformation:
$$\frac{\partial}{\partial \xi_i}*f=*(\eta^{ij}\xi_jf),\ \xi_i*f=*(\eta_{ij}\frac{\partial}{\partial \xi_j}f)$$
Here $\eta_{ij}=\eta(\xi_i,\xi_j)$ is the scalar product matrix and $\eta^{ij}$ is its inverse.

\subsection{Action of the Lie algebra $\mathfrak{sl_2}\times \mathfrak{sl_2}$ on Clifford forms.} Consider now the algebra $\Omega^{\dotp\dotp}=\cl(V)\otimes\Omega(M)$ of forms on an $n$-dimensional manifold $M$ with values in the Clifford algebra. Let the vielbein $\theta\in \Omega^{11}$ defines an isomorphism between the tangent bundle to $M$ and a trivial bundle with fiber $\mathrm{V}$. One can define two Hodge $*$ operators $*_1:\Omega^{pq}\to \Omega^{(n-p)q}$ and $*_2:\Omega^{pq}\to \Omega^{p(n-q)}$ acting on the Clifford algebra and on forms, respectively. Define an operator $E:\Omega^{pq}\to\Omega^{p+1,q+1}$ by $2Ex=\theta x+(-1)^{p+q}x\theta$ and the operators $E'=*_1^{-1}E*_1$, $F'=*_2^{-1}E*_2$, $F=*_1^{-1}*_2^{-1}E*_2*_1$, $H=p+q-n$ and $H'=p-q$.

These operators satisfy the following properties:
\begin{proposition}\label{sl2}
	\noindent 1. The operators $E,F,H, E',F',H'$ satisfy the relations of the Lie algebra $\mathfrak{sl}_2\times\mathfrak{sl}_2$, namely
	$$[H,E]=2E, [H,F]=-2F, [H',E']=2E', [H',F']=-2F'$$
	and all other pairs of generators commute.\vspace{2mm}
	
	\noindent 2. $\deg E = (1,1)$, $\deg F = (-1,-1)$, $\deg E'= (-1,1)$, $\deg F' = (1,-1)$, , $\deg H = \deg H' =(0,0)$.\vspace{2mm}
	
	\noindent 3. $2E'x=\theta x-(-1)^{p+q}x\theta.$\vspace{2mm}
	
	\noindent 4. $
	\exp(\frac{\pi}{2}(E'-F'))f(\xi_1,\ldots,\xi_n,\xi^1,\ldots,\xi^n)=\\~~~~~~~=f(\eta_{1j}\xi^j\,\ldots,\eta_{nj}\xi^j,\eta^{1j}\xi_j\,\ldots,\eta^{nj}\xi_j).$\vspace{2mm}
	
	\noindent 5.
	$\exp(\frac{\pi}{2}(E-F))=*_1*_2$
	
\end{proposition}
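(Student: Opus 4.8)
The plan is to prove the identity by reducing it to the representation theory of the first $\mathfrak{sl}_2$ factor together with the explicit Grassmann--Fourier description of the two star operators, rather than by expanding the exponential series directly. By Part 1 the triple $(E,F,H)$ spans a copy of $\mathfrak{sl}_2$ acting on $\Omega^{\dotp\dotp}$, and since this action is locally finite the space decomposes into finite-dimensional irreducibles. On each such irreducible the element $\exp(\frac{\pi}{2}(E-F))$ is the standard lift of the nontrivial Weyl reflection: in the weight basis it is the rotation sending the $H$-eigenspace of weight $m$ to that of weight $-m$ with a definite sign, while fixing the second grading $H'=p-q$ (because $E$ and $F$ commute with $H'$). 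The first step is therefore to record this Weyl element explicitly and to check that the bidegree shift it produces agrees with the one produced by $*_1*_2$; this is the bookkeeping sanity check that the two sides can possibly coincide.

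The cleanest route to the equality itself is to deduce this statement from Part 4 by conjugation. From the definitions $E'=*_1^{-1}E*_1$ and $F=*_1^{-1}*_2^{-1}E*_2*_1=(*_2*_1)^{-1}E(*_2*_1)$, and since $*_1$ acts only on the Clifford (fibre) tensor factor while $*_2$ acts only on the form (base) factor of $\cl(\mathrm{V})\otimes\Omega(M)$, the two stars commute up to a Koszul sign and each $*_i^2$ is a scalar on every fixed bidegree. Consequently $*_1^{-1}E*_1=E'$ on the nose, and $*_1^{-1}F*_1=F'$ up to a sign determined by the ratio of the scalars $*_1^2$ on adjacent bidegrees. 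Granting that these signs cancel, one obtains $*_1^{-1}(E-F)*_1=E'-F'$ and hence $*_1^{-1}\exp(\frac{\pi}{2}(E-F))*_1=\exp(\frac{\pi}{2}(E'-F'))$. Substituting for the right-hand side the explicit substitution operator supplied by Part 4, and reading $*_1$ off the Grassmann--Fourier formula $*f=\int e^{\eta^{ij}\xi_i\zeta_j}f\,d\zeta_n\cdots d\zeta_1$, one rewrites $\exp(\frac{\pi}{2}(E-F))=*_1\exp(\frac{\pi}{2}(E'-F'))*_1^{-1}$ as an explicit transformation of the variables $\xi_i,\xi^i$ and matches it term by term against the composite Fourier transform $*_1*_2$.

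As an independent confirmation I would run the $\mathfrak{sl}_2$ computation directly. Using that $2Ex=\theta x+(-1)^{p+q}x\theta$ is wedging by the vielbein and that $*_1,*_2$ are the fibre and base Fourier transforms, it suffices to compute both operators on a single lowest-weight vector (one annihilated by $F$) of each irreducible $\mathfrak{sl}_2$-string and then to propagate along the string; since both $\exp(\frac{\pi}{2}(E-F))$ and $*_1*_2$ intertwine the $\mathfrak{sl}_2$-action up to the Weyl reflection, agreement on such a generator of each string forces agreement on the whole of $\Omega^{\dotp\dotp}$. Here one can reduce to $n=1$ and to rank-one strings, where $\exp$ of the relevant $2\times2$ block is an elementary rotation.

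The main obstacle is entirely in the signs and normalisations. Three sources must be reconciled: the factor $(-1)^{p+q}$ in the definition of $E$; the values of $*_1^2$ and $*_2^2$, which depend on $n$, on the signature of $\eta$, and on the bidegree, and which control whether $*_1^{-1}F*_1$ equals $+F'$ or $-F'$, hence whether the conjugate of $E-F$ is $E'-F'$ rather than $E'+F'$; and the orientation hidden in the Berezin measure $d\zeta_n\cdots d\zeta_1$, which must be chosen compatibly in the fibre and in the base for the two Fourier transforms to compose into $*_1*_2$ with the correct overall constant. Concretely, the delicate point is to show that the bidegree-dependent scalar $(*_1^2)|_{\Omega^{pq}}/(*_1^2)|_{\Omega^{p+1,q-1}}$ is exactly $+1$ after fixing the orientation, so that no residual $i^{\,p-q}$-type twist survives and the coefficient $\frac{\pi}{2}$ in the exponent is the correct one; once the orientation is fixed and an orthonormal frame for $\eta$ is chosen, this becomes a finite sign computation that I expect to close the argument.
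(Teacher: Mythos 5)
There is a genuine gap, and it sits exactly at the step you postpone as a ``bookkeeping sanity check'': that check fails, and no amount of sign or orientation bookkeeping can repair it. Since $\deg E=(1,1)$ and $\deg F=(-1,-1)$, the operator $\exp(\tfrac{\pi}{2}(E-F))$ commutes with $H'$ and maps each $\Omega^{pq}$ into $\bigoplus_k\Omega^{(p+k)(q+k)}$; on an irreducible string it carries the $\Omega^{pq}$-component to one in $\Omega^{(n-q)(n-p)}$. By contrast $*_1*_2$ maps $\Omega^{pq}$ to $\Omega^{(n-p)(n-q)}$, i.e.\ it \emph{reverses} the $H'$-grading. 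So for $p\neq q$ the two sides of Part 5 land in different bidegree components: e.g.\ for $n=2$ and $x=\xi_1\otimes 1\in\Omega^{10}$ one has $\exp(\tfrac{\pi}{2}(E-F))x\in\Omega^{10}\oplus\Omega^{21}$ while $*_1*_2\,x\in\Omega^{12}$. Your main route makes this concrete: conjugating Part 4 by $*_1$ as you propose reduces Part 5 to the identity ``substitution operator $=*_2*_1$'', which is false on the nose --- the substitution operator of Part 4 preserves $p+q$ (it fixes $1\in\Omega^{00}$), whereas $*_2*_1$ sends $p+q$ to $2n-p-q$ (it sends $1$ to a multiple of the top element in $\Omega^{nn}$). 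So the promised ``term by term'' matching cannot succeed, and the delicate point is not, as you claim, ``entirely in the signs and normalisations'': it is a grading obstruction visible before any sign is computed. What your computations are actually circling around is the relation $*_1*_2=\exp(\tfrac{\pi}{2}(E-F))\,\exp(\tfrac{\pi}{2}(E'-F'))$ (up to the sign conventions you discuss), the product of the \emph{two} commuting Weyl elements; Part 5 as printed must be read with this primed factor understood, or restricted to forms killed by the primed $\mathfrak{sl}_2$ --- which is the only way the paper uses it ($R\in\Omega^{22}$, $n=4$, with $E'R=F'R=0$, giving self-duality $*_1*_2R=R$). A further symptom: conjugation by $*_1*_2$ exchanges $E'$ and $F'$ (up to sign), while $\exp(\tfrac{\pi}{2}(E-F))$ commutes with them, so the two operators do not even have the same adjoint action on all six generators.

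On the comparison with the paper: the paper proves Parts 1--3 by realizing all six operators explicitly as Grassmann differential operators ($E=\xi_i\otimes\xi^i$, $F=\partial/\partial\xi_i\otimes\partial/\partial\xi^i$, etc.) and disposes of Parts 4--5 with a one-line argument that both sides conjugate the generators the same way. Your route (c) --- evaluate both sides on a lowest-weight vector of each string and propagate using the intertwining property --- is in fact the correct way to make such a sketch rigorous, since agreement of adjoint actions alone determines an operator only up to a factor commuting with the $\mathfrak{sl}_2\times\mathfrak{sl}_2$-action; but carried out honestly it runs into the same mismatch on strings with $p\neq q$ (the singlet $\xi_1$ above is a lowest-weight vector fixed by $\exp(\tfrac{\pi}{2}(E-F))$ but moved by $*_1*_2$), and would have led you to the corrected identity rather than the printed one. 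Finally, note that your proposal proves none of Parts 1--4: it invokes Part 1 (local finiteness and the Weyl element) and Part 4 (the substitution formula) as inputs, whereas they are constituents of the proposition being proved, so even absent the obstruction above the argument would be circular as a proof of the full statement.
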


\begin{proof}
The generators act on Clifford forms without any differentiation so we need to verify the proposition at one point of the manifold. At one point $m\in M$ a Clifford form takes value in $\cl(\mathrm{V})\otimes \Lambda(T_mM)$ which, using the isomorphism defined by $\theta$, can be identified with $\cl(\mathrm{V})\otimes\Lambda(\mathrm{V}^*)$. 

To make explicit computation one can further identify this algebra with the algebra $\Lambda(\mathrm{V})\otimes\Lambda(\mathrm{V}^*)$. Given a basis in $\mathrm{V}$ the algebras $\Lambda(\mathrm{V})$ and $\Lambda(\mathrm{V}^*)$ can be identified with the space of functions of $n$ Grassmann variables $\xi_i$ and $\xi^i$, respectively, and the vielbein writes down as $\theta= \xi_i\otimes \xi^i$. The following computation in $\cl(\mathrm{V})\otimes\Lambda(\mathrm{V}^*)$
\begin{equation}\label{Ex}
\begin{split}
2Ex=&\theta x+(-1)^{p+q}x\theta=\xi_ia\otimes \xi^i\alpha+(-1)^{p+q} a\xi_i\otimes \alpha\xi^i=\\ =&\xi_ia\otimes \xi^i\alpha+(-1)^p a\xi_i\otimes \xi^i\alpha=\xi_i\wedge a\otimes \xi^i\alpha,
\end{split}
\end{equation}
where $x=a\otimes\alpha$ with $a\in \cl^p(\mathrm{V})$ and $\alpha \in \Omega^q(M)$, 
shows that the operators $E$ and $E'$ can be written in $\Lambda(\mathrm{V})\otimes\Lambda(\mathrm{V}^*)$ just as multiplication operator by $\xi_i\otimes \xi^i$. Therefore all operators can be re written as
\begin{align*} E&=\xi_i\otimes \xi^i,& F&= \frac{\partial}{\partial \xi_i}\otimes \frac{\partial}{\partial \xi^i} & H&=\,\xi_i\frac{\partial}{\partial \xi_i}\otimes 1+1\otimes \frac{\partial}{\partial \xi^i}\xi^i\\ E'&=\eta_{ij} \frac{\partial}{\partial \xi_i}\otimes \xi^j, &  F'&=\xi_i\otimes \eta^{ij}\frac{\partial}{\partial \xi^j}, & H'&=\,1\otimes \xi^i\frac{\partial}{\partial \xi^i}-\xi_i\frac{\partial}{\partial \xi_i}\otimes 1.
\end{align*}
The property 1 can be verified by a simple straightforward computation. The property 2 is obvious. The property 3 follows from the computation similar to (\ref{Ex}).
\begin{align*}
2E'x&=\theta x-(-1)^{p+q}x\theta=\xi_ia\otimes \xi^i\alpha-(-1)^{p+q} a\xi_i\otimes \alpha\xi^i=\\&=\xi_ia\otimes \xi^i\alpha-(-1)^p a\xi_i\otimes \xi^i\alpha=\xi_i\vdash a\otimes \xi^i\alpha=\eta_{ij}\frac{\partial}{\partial \xi_j}a\otimes \xi^i\alpha.
\end{align*}

The properties 5 and 6 follow from the fact that the operators in the left hand side conjugate the generators exactly as the operation in the left hand side does.
	
The properties 2,3,5 and 6 follow from these expressions immediately. The property 1 also can be verified by a direct computation. The property 4 is a standard statement about finite-dimensional representations of the algebra $\mathfrak{sl}_2$. 
\end{proof}

Other obvious but useful relations are:

\noindent 7. $\theta x = (E+E')x,\ x \theta = (-1)^{p+q}(E-E')x$,\vspace{2mm}

\noindent 8. $E^k x= \Pi^{p+k} \theta^kx= (-1)^{k(p+q)}\Pi^{p+k} x\theta^k$,\vspace{2mm}

\noindent 9. $E^k$ acting on the weight space with weight (eigenvalue of $H$) equal to $l$ is an injection if $k+l\leqslant 0$ and a surjection if $k+l\geqslant 0$.

\subsection{Supertrace.} A \textit{supertrace} of a Clifford algebra is a linear function on $\cl(\mathrm{V})$ satisfying the identity $\str(ab)=(-1)^{\deg a\deg b}\str(ba)$ and normalized by the condition that $\str \vol=1$. If $n$ is even then the Clifford algebra has a unique representation and the supertrace is given by $\str a=\tr{a\vol}$, where $\tr$ is the usual trace in this representation. For $n$ odd the space $\mathrm{V}$ can be embedded into a space $\mathrm{V}^+$ of dimension $n+1$ with one extra generator $\xi^0$ orthogonal to $\mathrm{V}$. The supertrace is given by $\str a=\tr{a\xi^0\vol}$ with the trace taken in the representation of $\cl(\mathrm{V}^+)$. The supertrace vanishes on $\cl^i(\mathrm{V})$ for $i\neq n$. 

In particular for two forms $\alpha\in \Omega^{pq}$ and $\beta\in \Omega^{p'q'}$ we have
$$ \str \alpha\beta = (-1)^{pp'+qq'}\str \beta \alpha.$$

\subsection{The action of the group $Pin$.} The invertible elements of $g\in \cl(\mathrm{V})$ such that $g^{-1}\cl^1(\mathrm{V})g\subset \cl^1(\mathrm{V})$ form a Lie group denoted by $Pin(\mathrm{V})$. The Lie algebra of this group is $\cl^2(\mathrm{V})$ with respect to the commutator. It is isomorphic to the Lie algebra $\mathfrak{so}(\mathrm{V})$ and acts on the whole Clifford algebra preserving degree. The action of the group $Pin(\mathrm{V})$ obviously commutes with the action of the algebra $\mathfrak{sl_2}\times\mathfrak{sl_2} $.

\section{Use of the formalism}

Let $M$ be an $n$-dimensional manifold and $\mathrm{V}$ an $n$-dimensional vector space with a nondegenerate quadratic form $\eta$. Let we are given two Clifford algebra valued 1-forms $\theta\in \Omega^{11}$ and $\omega\in \Omega^{21}$ such that the form $\theta$ is nondegenerate in the sense that it induces an isomorphism at every point of $M$ between the tangent and a trivial bundle with fiber $\mathrm{V}$ by the rule $X\mapsto i_X\theta$. These data define a Riemannian or pseudo-Riemannian metric on $M$ and a metric preserving connection in the tangent bundle in it. Indeed, the form $\omega$ defines a connection in the trivial bundle with fiber $\mathrm{V}$ by the formula $\nabla e = de + \omega e - e\omega$, where $e\in \Omega^{10}$. The isomorphism $\theta$ induces the metric and the connection on $TM$.

\subsection{Gauge group action.}
A pair $\theta,\omega$ defines the same metric and connection as a pair $\theta',\omega'$ if and only if 
there exists a gauge transformation relating them, namely if there exists a function $g$ on $M$ with values in $Pin(\mathrm{V})\subset \cl(\mathrm{V})$ such that $\theta'=g^{-1}\theta g,\ \omega'= g^{-1}\omega g+g^{-1}d g$.

Conversely, any metric and a connection in a tangent bundle preserving it can be represented in this way if and only if the tangent bundle is trivializable. In particular it always can be done locally and glued together by gauge transformations.

\subsection{Curvature.}
The connection $\nabla$ can be extended in a standard way to $\Omega^{pq}$ by the formula $\nabla e \alpha = (\nabla e) \alpha + ed\alpha$ for $e\in \Omega^{p0}$ and $\alpha \in \Omega^{0q}$. It can be therefore written as $\nabla x = dx + \omega x -(-1)^q x\omega$ for any $x\in \Omega^{pq}$. This allows to compute $\nabla^2 x = (d\omega+\omega^2)x-x(d\omega+\omega^2)$ and thus the curvature takes the form $$R=d\omega+\omega^2\in \Omega^{22}.$$

\subsection{Torsion.} The torsion can be defined as an element $t\in \Omega^{12}$ such that for any two vector fields $X$ and $Y$ on $M$ we have $i_Yi_Xt = i_X\nabla i_Y\theta-i_Y\nabla i_X\theta-i_{[X,Y]}\theta$. Taking into account the identity $i_Xi_Yd\alpha=i_Ydi_X\alpha-i_Xdi_Y\alpha+i_{[X,Y]}\alpha$ valid for any 1-form $\alpha$ one can easily verify that $$t=d\theta+\omega\theta+\theta\omega=d\theta+2E'\omega.$$ The operator $E':\Omega^{21}\to \Omega^{12}$ is an isomorphism since $H'|_{\Omega^{21}}=-1$ (property 9). It implies that given $\theta$ the torsion determines the form $\omega$ unambiguously. In particular for a given $\theta$ there exists a unique form $\omega$ such that the torsion vanishes. A pair $\theta, \omega$ for which the torsion vanishes is called \textit{torsionless}.

\subsection{Bianchi identities.} Computing the covariant derivative of the torsion one gets $$\nabla t= dt+\omega t-t\omega=R\theta-\theta R=-2E'R.$$ In particular this identity implies that if the torsion vanishes we have $E'R=0$. Since $H'R=0$ we have $F'R=0$ and therefore  the curvature form is invariant with respect to the second $\mathfrak{sl_2}$ algebra and therefore by the property 5 the form $R$ considered as a function with values in $\Lambda^2(\mathrm{V})\otimes \Lambda^2(\mathrm{V})$ is symmetric. The observation that the Bianchi identity is a consequence of an $\mathfrak{sl_2}$ invariance belongs to P.\v{S}evera \cite{S}.

The second Bianchi identity $\nabla R=0$ can be verified by a short direct calculation.

\subsection{Conformal transformations and the Weyl tensor.} Let $(\theta,\omega)$ be a pair of Clifford forms with vanishing torsion. Let $\tilde{\theta}=e^\phi\theta$ be a conformal transformation of the form $\theta$. This transformation corresponds to a conformal transformation of the metric. Compute now the induced transformation of the form $\omega$ and of the curvature $R$.

\begin{proposition}
Let $\theta,\omega$ be a torsionless pair of Clifford forms, $R$ be its curvature and $\phi$ a function on $M$. Than the pair
$$
\tilde{\theta}=e^\phi\theta,\ \tilde{\omega}=\omega+\theta \varepsilon-\varepsilon\, \theta,
$$
is also torsionless with curvature
$$\tilde{R}=R-\theta \rho-\rho\theta=R-2E\rho$$
where 
$$ \varepsilon = \frac14 F'd\phi\in \Omega^{10}$$
and
$$
\rho=d\varepsilon +\omega\varepsilon-\varepsilon\omega+\varepsilon\theta\varepsilon\in \Omega^{11}.
$$

\end{proposition}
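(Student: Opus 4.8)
The plan is to reduce everything to the operator calculus of Proposition~\ref{sl2}, setting $\mu:=\tilde\omega-\omega=\theta\varepsilon-\varepsilon\theta$. Since $\varepsilon\in\Omega^{10}$ has odd total degree, the definition $2Ex=\theta x+(-1)^{p+q}x\theta$ gives $\mu=2E\varepsilon\in\Omega^{21}$, so $\tilde\omega$ is again a $\cl^2(\mathrm V)$-valued $1$-form, i.e.\ a legitimate spin connection. For the torsion I would compute $\tilde t=d\tilde\theta+\tilde\omega\tilde\theta+\tilde\theta\tilde\omega$ head-on. Using $d\tilde\theta=e^{\phi}(d\phi\,\theta+d\theta)$ and the relations $\theta y=(E+E')y$, $y\theta=(-1)^{p+q}(E-E')y$ (item~7), which give $\theta y+y\theta=2E'y$ for $y$ of odd total degree, the connection terms collapse to $\tilde\omega\tilde\theta+\tilde\theta\tilde\omega=e^{\phi}\,2E'\tilde\omega=e^{\phi}(2E'\omega+4E'E\varepsilon)$. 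Hence
$$\tilde t=e^{\phi}\bigl(d\phi\,\theta+d\theta+2E'\omega+4E'E\varepsilon\bigr)=e^{\phi}\bigl(d\phi\,\theta+t+4E'E\varepsilon\bigr),$$
which equals $e^{\phi}(d\phi\,\theta+4E'E\varepsilon)$ because $t=0$. It then remains to prove the fibrewise identity $4E'E\varepsilon=-\,d\phi\,\theta$ for $\varepsilon=\tfrac14F'd\phi$. This I would verify at one point: there $E'd\phi=0$ for degree reasons and $d\phi$ is an eigenvector of $H'$, so together with $[E,E']=0$ and $[E',F']=H'$ from Proposition~\ref{sl2} both the identity and the normalisation $\tfrac14$ drop out.

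For the curvature, expand
$$\tilde R=d\tilde\omega+\tilde\omega^{2}=R+(d\mu+\omega\mu+\mu\omega)+\mu^{2}=R+\nabla\mu+\mu^{2},$$
recognising the bracket as the covariant derivative of $\mu\in\Omega^{21}$. Torsionlessness gives $\nabla\theta=t=0$, so $\nabla$ being a graded derivation yields $\nabla\mu=\nabla(\theta\varepsilon-\varepsilon\theta)=-(\theta\nabla\varepsilon+(\nabla\varepsilon)\theta)=-2E\nabla\varepsilon$, where I used that $\nabla\varepsilon\in\Omega^{11}$ has even degree. Thus $\nabla\mu$ already supplies the $\nabla\varepsilon=d\varepsilon+\omega\varepsilon-\varepsilon\omega$ part of $-2E\rho$, and the whole problem is pushed onto the quadratic term $\mu^{2}$.

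The heart of the matter is therefore $\mu^{2}$. Since $\mu$ is a $1$-form valued in the Lie algebra $\cl^{2}(\mathrm V)$, the product $\mu^{2}=\mu\wedge\mu$ takes values in $[\cl^{2},\cl^{2}]\subseteq\cl^{2}$ and so lands in $\Omega^{22}$, as a curvature must; the a~priori $\cl^{4}$ and $\cl^{0}$ contributions cancel. What remains is the purely algebraic identity $\mu^{2}=-2E(\varepsilon\theta\varepsilon)$: granting it, $\nabla\mu+\mu^{2}=-2E(\nabla\varepsilon+\varepsilon\theta\varepsilon)=-2E\rho$, and since $\rho\in\Omega^{11}$ is even, $2E\rho=\theta\rho+\rho\theta$, which is exactly $\tilde R=R-\theta\rho-\rho\theta$. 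I expect this identity to be the main obstacle. Expanding $(\theta\varepsilon-\varepsilon\theta)^{2}$ produces the four words $\theta\varepsilon\theta\varepsilon$, $\varepsilon\theta\varepsilon\theta$, $\theta\varepsilon^{2}\theta$, $\varepsilon\theta^{2}\varepsilon$, and one must show that their base-antisymmetrised combination collapses to $-2E(\varepsilon\theta\varepsilon)$. I would carry this out at a point in the model $\cl(\mathrm V)\otimes\Lambda(\mathrm V^{*})$, representing $\varepsilon$ by a single Clifford generator and repeatedly applying the Clifford anticommutation relations to bring the two copies of $\varepsilon$ together; the fact that $\varepsilon$ has fibre degree one is exactly what makes $\varepsilon\theta\varepsilon$ purely $\cl^{1}$-valued, so that $\rho$ indeed lies in $\Omega^{11}$.
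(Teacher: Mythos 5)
Your skeleton is sound and, where it overlaps the paper, essentially the same computation reorganized through the $\mathfrak{sl}_2\times\mathfrak{sl}_2$ calculus: $\mu:=\tilde\omega-\omega=2E\varepsilon$, the torsion identity $4E'E\varepsilon=-d\phi\,\theta$ (your route via $E'd\phi=0$, $[E,E']=0$, $[E',F']=H'$ is fine, granted the normalization $vv'+v'v=2\eta(v,v')$ which the coordinate formulas of Proposition~\ref{sl2} and the lemma $E'F'=\mathrm{id}$ on $\Omega^{01}$ presuppose), and $\tilde R=R+\nabla\mu+\mu^2$ with $\nabla\mu=-2E\nabla\varepsilon$ are all correct. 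But the one step you defer, the ``purely algebraic identity $\mu^2=-2E(\varepsilon\theta\varepsilon)$'', is not a gap you can close: it is false. Set $\beta:=\theta\varepsilon+\varepsilon\theta=2E'\varepsilon\in\Omega^{01}$; this is a scalar $1$-form, so $\beta^2=0$, $\beta\varepsilon=\varepsilon\beta$, $\beta\theta=-\theta\beta$, and $\varepsilon^2$ is a scalar function. Writing $\mu=2\theta\varepsilon-\beta$, the cross terms cancel, and using $\varepsilon\theta\varepsilon=\beta\varepsilon-\varepsilon^2\theta$ one gets
\[
\mu^2=4\,\theta\varepsilon\theta\varepsilon=4\bigl(\theta\varepsilon\beta-\varepsilon^2\theta^2\bigr),
\qquad
2E(\varepsilon\theta\varepsilon)=\theta(\varepsilon\theta\varepsilon)+(\varepsilon\theta\varepsilon)\theta=2\bigl(\theta\varepsilon\beta-\varepsilon^2\theta^2\bigr),
\]
so $\mu^2=+4E(\varepsilon\theta\varepsilon)$, off from your target by a factor $-2$. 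Concretely, at a point with $n=3$, orthonormal $\xi_i$ and $\varepsilon=\xi_1$: $\mu=2\xi_2\xi_1\otimes\xi^2+2\xi_3\xi_1\otimes\xi^3$, $\mu^2=-8\,\xi_2\xi_3\otimes\xi^2\xi^3$, while $-2E(\varepsilon\theta\varepsilon)=+4\,\xi_2\xi_3\otimes\xi^2\xi^3$.

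The instructive point is that this is precisely where the paper's own proof slips: its last displayed equality silently uses $(\theta\varepsilon-\varepsilon\theta)^2=-\theta\varepsilon\theta\varepsilon-\varepsilon\theta\varepsilon\theta$, and since the $d\varepsilon$- and $\omega$-terms match identically (as your $\nabla\mu=-2E\nabla\varepsilon$ shows), there is no cross-cancellation to rescue it. Carried out correctly, your scheme yields $\tilde R=R-2E\bigl(\nabla\varepsilon-2\varepsilon\theta\varepsilon\bigr)$, i.e.\ the proposition should read $\rho=d\varepsilon+\omega\varepsilon-\varepsilon\omega-2\,\varepsilon\theta\varepsilon$. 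A component check confirms the corrected coefficient: with $\varepsilon=\frac14\phi^a\xi_a$ one finds $\nabla\varepsilon-2\varepsilon\theta\varepsilon=\frac14\bigl(\nabla_b\phi_a-\phi_a\phi_b+\frac12|\nabla\phi|^2\eta_{ab}\bigr)\,\xi^a\otimes\theta^b$ (indices raised with $\eta$), the classical Schouten-type combination governing the conformal change of curvature, whereas the printed $\rho=\nabla\varepsilon+\varepsilon\theta\varepsilon$ gives relative coefficients $+\frac12\phi_a\phi_b-\frac14|\nabla\phi|^2\eta_{ab}$, contradicting the known transformation law already over flat $R=0$. So: your torsion half stands, your curvature reduction stands and is cleaner than the paper's term-by-term expansion, but the deferred identity must be replaced by $\mu^2=4E(\varepsilon\theta\varepsilon)$, and the statement itself corrected accordingly; your suggested point-wise verification, had you executed it, would have exposed exactly this.
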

It implies that the image of the curvature $R$ in the cokernel of $E$ in $\Omega^{22}$ is invariant under conformal transformations. In dimension four or higher this cokernel is nontrivial. Taking into account that for any $k$ we have $\Lambda^{\dotp\dotp}=\im E\oplus \ker F$. The projection of $R$ onto $\ker F\cap\Lambda^{22}$ along $E(\Omega^{11})$ is called the Weyl tensor and is denoted by $W$.

\begin{proof} Observe first that $E'F'\alpha =\alpha$ for $\alpha\in\Omega^{01}$. Therefore $E'F'd\phi=\frac12\theta (F'd\phi)+\frac12(F'd\phi)\theta = d\phi$. The torsion and the curvature can be computed directly:

\begin{align*}
&e^{-\phi}(d\tilde{\theta}+\tilde{\theta}\tilde{\omega}+\tilde{\omega}\tilde{\theta})=e^{-\phi}d(e^{\phi}\theta)+\theta\omega +\omega \theta+\\&+\theta(\theta \varepsilon-\varepsilon \theta)+(\theta \varepsilon-\varepsilon \theta)\theta=d\theta +\theta\omega +\omega \theta+d\phi\theta+\\ &+\theta(\theta \varepsilon+\varepsilon \theta)-(\theta \varepsilon+\varepsilon \theta)\theta=d\phi\,\theta+\frac12\theta d\phi-\frac12 d\phi\,\theta=0.
\end{align*}
and 

\begin{align*}
\tilde{R}=&d\tilde{\omega}+\tilde{\omega}^2= R+d(\theta\varepsilon-\varepsilon\theta)+\omega(\theta\varepsilon-\varepsilon\theta)+(\theta\varepsilon-\varepsilon\theta)\omega+(\theta\varepsilon-\varepsilon\theta)^2=\\=&-(\omega\theta+\theta\omega) \varepsilon+\varepsilon(\omega\theta+\theta\omega) -\theta d\varepsilon-d\varepsilon\theta+\omega(\theta\varepsilon-\varepsilon\theta)+(\theta\varepsilon-\varepsilon\theta)\omega+\\+&(\theta\varepsilon-\varepsilon\theta)^2=R-\theta(d\varepsilon +\omega\varepsilon-\varepsilon\omega+\varepsilon\theta\varepsilon)-(d\varepsilon +\omega\varepsilon-\varepsilon\omega+\varepsilon\theta\varepsilon)\theta.
\end{align*}
\end{proof}

\subsection{Hilbert action.}

On the space of pairs $\theta,\omega$ define the Hilbert functional:
$$S(\theta, \omega)=\str \int_M \theta^{n-2}(d\omega+\omega^2)=\str\int_M E^{n-2}R.$$

\begin{proposition} The Hilbert functional is gauge invariant.
The variation of this functional is
\begin{align*}\frac{\delta S}{\delta \theta} =&(-1)^{(n^2+n)/2}(n-2)E^{n-3}(d\omega+\omega^2)\\
\frac{\delta S}{\delta \omega}=& (-1)^{(n^2-n)/2}E^{n-3}(d\theta+\theta\omega+\omega\theta).
\end{align*}
The induced closed  2-form on the space $\Omega^{11}(\partial M)\oplus \Omega^{n-1,n-2}(\partial M)$ is
$$w=\pm(n-2)\str\int_{\partial M}\delta\theta E^{n-3}\delta\omega$$
\end{proposition}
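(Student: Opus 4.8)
The plan is to dispatch the three assertions---gauge invariance, the two Euler--Lagrange derivatives, and the boundary $2$-form---with a single computational device: the graded cyclicity $\str\alpha\beta=(-1)^{pp'+qq'}\str\beta\alpha$ for $\alpha\in\Omega^{pq},\beta\in\Omega^{p'q'}$, together with the fact that $\str$ sees only the top Clifford component, so that $\str\int_M E^{n-2}R=\str\int_M\theta^{n-2}R$ and, more generally, $\str(\delta\theta\,\theta^{n-3}x)=\str(\delta\theta\,E^{n-3}x)$ (only the $\cl^{n-1}$ part of $\theta^{n-3}x$ can pair with $\delta\theta\in\cl^1$). Gauge invariance then follows at once: the curvature is gauge covariant, $R\mapsto g^{-1}Rg$, while $\theta^{n-2}\mapsto g^{-1}\theta^{n-2}g$ because conjugation is an algebra automorphism, so the integrand is conjugated by $g$; since $g$ takes values in the even part of $\cl(\mathrm V)$, cyclicity gives $\str(g^{-1}Xg)=\str X$ pointwise, and integrating over $M$ concludes.

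For the variation I would vary $\omega$ and $\theta$ separately. Varying $\omega$ gives $\delta R=d\,\delta\omega+\omega\,\delta\omega+\delta\omega\,\omega=\nabla\delta\omega$, hence $\delta_\omega S=\str\int_M\theta^{n-2}\nabla\delta\omega$. The key step is a covariant Stokes theorem: the connection terms in $\nabla$ cancel under $\str$ by cyclicity, so $\str\int_M\nabla X=\str\int_{\partial M}X$. Applying the Leibniz rule $\nabla(XY)=(\nabla X)Y+(-1)^{q_X}X\nabla Y$ with $X=\theta^{n-2}$, $Y=\delta\omega$ and $\nabla\theta=t=d\theta+\omega\theta+\theta\omega$, I move the derivative onto $\theta^{n-2}$, leaving a boundary term $\pm\str\int_{\partial M}\theta^{n-2}\delta\omega$ and a bulk term built from $\nabla\theta^{n-2}=\sum_k(-1)^k\theta^k t\,\theta^{n-3-k}$ paired against $\delta\omega$. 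Using cyclicity and $2Ex=\theta x+(-1)^{p+q}x\theta$ this bulk sum collapses to a multiple of $E^{n-3}t$, giving $\delta S/\delta\omega\propto E^{n-3}(d\theta+\theta\omega+\omega\theta)$. Varying $\theta$ is purely algebraic: $\delta_\theta S=\str\int_M\sum_k\theta^k\delta\theta\,\theta^{n-3-k}R$, and the same collapse turns the $n-2$ contributions into $(n-2)\,\str\int_M\delta\theta\,E^{n-3}R$. The signs $(-1)^{(n^2\pm n)/2}$ are recovered by tracking the degree factors through the cyclic permutations and the Stokes sign.

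The boundary term produced by the $\omega$-variation is the presymplectic potential $\Theta=\pm\str\int_{\partial M}\theta^{n-2}\delta\omega$, regarded as a $1$-form on the space of boundary fields; the $\theta$-variation, being algebraic, contributes no boundary term. I would then set $w=\delta\Theta$, the exterior derivative in field space. Since $\delta(\delta\omega)=0$, only $\delta(\theta^{n-2})=\sum_k\theta^k\delta\theta\,\theta^{n-3-k}$ survives, and the same collapse as in the $\theta$-variation produces $w=\pm(n-2)\str\int_{\partial M}\delta\theta\,E^{n-3}\delta\omega$. Closedness is then immediate and formal: $\delta w=\delta^2\Theta=0$.

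The main obstacle will not be conceptual but combinatorial: proving the collapse identity $\str\sum_{k}\theta^k X\,\theta^{n-3-k}Y=(n-2)\,\str\bigl(X\,E^{n-3}Y\bigr)$ (and its signed torsion analogue for $\nabla\theta^{n-2}$) uniformly in $n$, and pinning down every overall sign. This rests on a graded self-adjointness of $E$ under the supertrace pairing, namely $\str(Ex\cdot y)=(-1)^{p+q}\str(x\cdot Ey)$ for $x\in\Omega^{pq}$, which I would establish first directly from the cyclicity property and then iterate to reduce the whole string to a single power of $E$.
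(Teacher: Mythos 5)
Your proposal is correct and takes essentially the same route as the paper's own proof: vary $S$ directly, integrate by parts to isolate the boundary term $\str\int_{\partial M}\theta^{n-2}\delta\omega$, collapse the cyclic sums $\sum_{i+j=n-3}\theta^i(\cdot)\theta^j$ using the supertrace cyclicity $\str\alpha\beta=(-1)^{pp'+qq'}\str\beta\alpha$ together with the identity $E^k x=\Pi^{p+k}\theta^k x=(-1)^{k(p+q)}\Pi^{p+k}x\theta^k$ (property 8), and obtain $w$ as the field-space differential of the boundary term, closed because $\delta^2=0$. Your covariant repackaging of the $\omega$-variation (covariant Stokes theorem plus $\nabla\theta^{n-2}$ expressed through the torsion) and your explicit graded self-adjointness lemma for $E$ under $\str$ are only cosmetic variants of the paper's direct computation, which recombines the same terms into $\Pi^{n-2}\bigl(\theta^{n-3}(d\theta+\omega\theta+\theta\omega)\bigr)$.
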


Since $E^{n-3}:\Omega^{12}\to\Omega^{n-2,n-1}$ is an isomorphism it implies that the vanishing of $\frac{\delta S}{\delta \omega}$ is equivalent to the vanishing of the torsion $t=d\theta+\omega\theta+\theta\omega$. Similarly since $\ker E^{n-3}:\Omega^{22}\to\Omega^{n-1,n-1}=\ker F:\Omega^{22}\to \Omega^{11}$, the vanishing of $\frac{\delta S}{\delta \theta}$ is equivalent to the vanishing of the Ricci tensor $r=FR$.
\begin{proof}
Computing the variation of the Hilbert functional 
$$\delta S=\str\int_{\partial M} \theta^{n-2}\delta\omega + \str\int_M \frac{\delta S}{\delta \theta}\delta \theta + \str\int_M \frac{\delta S}{\delta \omega}\delta \omega$$
one gets
$$\frac{\delta S}{\delta \theta} =\Pi^{n-1}\left(\sum_{i=0}^{n-3-i}\theta^{n-3-i}(d\omega+\omega^2)\theta^i\right)=(n-2)\Pi^{n-1}\left((d\omega+\omega^2)\theta^{n-3}\right).
$$
On the other hand
\begin{align*}
\frac{\delta S}{\delta \omega}=&\Pi^{n-2}\left((-1)^{n-1}d\theta^{n-2}+ \theta^{n-2}\omega+(-1)^{n-1}\omega\theta^{n-2}\right)=\\=&\Pi^{n-2}(\theta^{n-3}(d\theta+\omega\theta+\theta\omega)).
\end{align*}
Using the property 8  one gets the desired answer.

The expression for the 2-form is given by the variation of the boundary term.

\begin{align*}
w=& \delta\str\int_{\partial M}\theta^{n-2}\delta\omega=\str\int_{\partial M}\sum_{i+j=n-3}\theta^i\delta\theta\theta^j\delta\omega=\\=&\str\int_{\partial M}\sum_{i+j=n-3}(-1)^i\delta\theta\Pi^{n-1}(\theta^j\delta\omega\theta^i)=\\=&(n-2)(-1)^{n-1}\str\int_{\partial M}\delta\theta\Pi^{n-1}(\delta\omega\theta^{n-3})=\\=&(n-2)(-1)^{n-1}\str\int_{\partial M}\delta\theta E^{n-3}\delta\omega.
\end{align*}
\end{proof}
\begin{center}
\begin{tikzpicture}[]
\draw[step=1cm,gray,very thin,rotate around={45:(0,0)}](0,0) grid (4,4);
\draw[gray,very thin,->,rotate around={45:(0,0)}](0,0)--(4.5,0);
\draw[gray,very thin,->,rotate around={45:(0,0)}](0,0)--(0,4.5);
\node at (0,2.8) {$R,W$};
\node at (0,1.4) {$\theta$,$r$};
\node at (-0.7,2.1) {$\omega$};
\node at (0.7,2.1) {$t$};
\begin{scope}[xshift=5.5cm,yshift=2.8cm]
\draw[->] (0,0)--(1.4,0);\node[right] at (1.4,0) {$E'$};
\draw[->] (0,0)--(0,1.4);\node[above] at (0,1.4) {$E$};
\draw[->] (0,0)--(-1.4,0);\node[left] at (-1.4,0) {$F'$};
\draw[->] (0,0)--(0,-1.4);\node[below] at (0,-1.4) {$F$};
\end{scope}
\node at (3.3,3.3) {$q$};
\node at (-3.3,3.3) {$p$};
\node at (2,-1) {Zoo of Clifford forms and the action of $\mathfrak{sl}_2\times\mathfrak{sl}_2$ $(n=4)$};
\end{tikzpicture}
\end{center}

\subsection{Special case $n=4$.}
The dimension 4 of the manifold $M$ is special in particular since in this case $HR=0$ and the equation of motion (Einstein equation) amounts to $ER=0$. Together with the Bianchi identity $E'R=0$ it implies that the curvature $R$ is invariant under the action of both $\mathfrak{sl}_2$ algebras. In this dimension the action of the operators $*_1$ and $*_2$ are involutions preserving the subspace $\Omega^{22}$ and therefore the curvature can be decomposed as $R=R_{++}+R_{+-}+R_{-+}+R_{--}$ according to the eigenvalues of the operators $*_1$ and $*_2$. By the property 6 the curvature $R$ is invariant under the product $*_1*_2$ and therefore it is self-dual, i.e., $R_{+-}=R_{-+}=0$, see \cite{AHS}. The components $R_{++}$ and $R_{--}$ are symmetric and traceless, since $E^2R=E'^2R=0$. Therefore the curvature $R$ takes value in a 10-dimensional space, which is possible to deduce also from the analysis of the decomposition of $\Omega^{\dotp\dotp}$ into irreducible components w.r.t to the $\mathfrak{sl}_2\times\mathfrak{sl}_2$ action. Namely the decomposition into irreducibles is given by
\begin{align*} \Omega^{\dotp\dotp}=W_{40}\oplus W_{04}\oplus \mathbb{C}^4\otimes W_{13}\oplus \mathbb{C}^4\otimes W_{31}\oplus \mathbb{C}^6\otimes W_{22}\oplus \\\oplus\mathbb{C}^9\otimes W_{20}\oplus\mathbb{C}^9\otimes W_{02}\oplus \mathbb{C}^{16}\otimes W_{11}\oplus\mathbb{C}^{10}\otimes W_{00} ,
\end{align*}
where $W_{ij}$ is an irreducible representation with the highest weight $(i,j)$ of dimension $(i+1)(j+1)$.
%\iffalse

\section{Complex structure} In this section we generalize the construction for the case where the space $\mathrm{V}$  is provided with a complex structure $J$ compatible with the metric. In order to simplify notation we rename this space, vielbein, connection, torsion, curvature etc. into  $\mathrm{V}^{\mathbb{R}}$, $\theta^{\mathbb{R}}$ $t^{\mathbb{R}}$, $R^{\mathbb{R}}$ etc.

Recall that a complex structure of on $\mathrm{V}^\mathbb{R}$ is an operator $J:\mathrm{V}^{\mathbb{R}}\to \mathrm{V}^{\mathbb{R}}$ with $J^2=-1$ and preserving the metric $\eta$, namely,  such that $\eta(v_1,v_2)=\eta(Jv_1,Jv_2)$. Such operator gives a decomposition of the complexification $\mathrm{V}^\mathbb{C}$ of the space $\mathrm{V}^{\mathbb{R}}$ into a sum of isotropic eigenspaces of this operator $\mathrm{V}^\mathbb{C}=\mathrm{V}\oplus \bar{\mathrm{V}}$ and therefore the algebra $\Lambda(\mathrm{V}^{\mathbb{R}})\otimes \Lambda((\mathrm{V}^{\mathbb{R}})^*) = \Lambda(\mathrm{V}+\bar{\mathrm{V}})\otimes \Lambda(\mathrm{V}^*+\bar{\mathrm{V}}^*)$ acquires a $\mathbb{Z}^4$ grading. The corresponding graded components are denoted by $\Lambda^{p\bar{p}q\bar{q}}$. 

Given a complex structure in $\mathrm{V}^\mathbb{R}$ the vielbein $\theta^{\mathbb{R}}$ induces a complex structure on every tangent space to the manifold $M$ and thus an almost complex structure on $M$. The space of Clifford forms on $M$ is therefore graded by $\mathbb Z_4$. We denote by $\Omega^{\dotp\dotp\dotp\dotp}$ the space of $\cl(\mathrm{V}^\mathbb{C})$-valued forms on $M$. The corresponding graded components are denoted by $\Omega^{p\bar{p}q\bar{q}}$ and the whole graded algebra by $\Omega^{\dotp\dotp\dotp\dotp}$.  With respect to this complex structure we have $\theta\in \Omega^{1010}$ and $\bar{\theta}\in \Omega^{0101}$.  The complex structure is integrable if $d\theta \in\Omega^{1020}\oplus \Omega^{1011}$  (and the component in $\Omega^{1002}$ vanishes). In this case one can define a decomposition of the differential $d=\partial+\bar{\partial}$ with the property $\partial:\Omega^{p\bar{p}q\bar{q}}\to \Omega^{p\bar{p}(q+1)\bar{q}}$ and $\bar{\partial}:\Omega^{p\bar{p}q\bar{q}}\to \Omega^{p\bar{p}q(\bar{q}+1)}$.

\subsection{Action of the algebra $\widehat{\mathfrak{sl}}_4$ on complex Clifford forms.}
Define the operators 

\begin{align*} 2E_0x&=\theta x+(-1)^{(p+\bar{p}+q+\bar{q})}x\theta,&
2F_1x&=\theta x-(-1)^{(p+\bar{p}+q+\bar{q})}x\theta\\
2F_2x&=\bar{\theta} x+(-1)^{(p+\bar{p}+q+\bar{q})}x\bar{\theta}, &
2E_3&=\bar{\theta} x-(-1)^{(p+\bar{p}+q+\bar{q})}x\bar{\theta}
\end{align*}

Observe that $E_3=*_1^{-1}F_2*_1$ and $F_1=*_1^{-1}E_0*_1$. Define also $E_1=*_2^{-1}F_2*_2$, $E_2=*_1^{-1}*_2^{-1}E_0*_2*_1$, $F_0=*_1^{-1}*_2^{-1}F_2*_2*_1$, $H_0=n-q''-p$, $H_1=q''-p''$, $H_3=p'-q'$ and $H_2=n-q'-p''$. These operators have degrees
\begin{align*} \deg{E_0}&=(1,0,1,0),& \deg{E_1}&=(0,1,-1,0),& \deg{E_2}&=(0,-1,0,-1),\\ \deg{E_3}&=(-1,0,0,1), &
\deg{F_0}&=(-1,0,-1,0),& \deg{F_1}&=(0,-1,1,0), \\ \deg{F_2}&=(0,1,0,1),&\deg{F_3}&=(1,0,0,-1), &\deg H_i&=(0,0,0,0).
\end{align*}

\begin{proposition}
The operators $\{E_i,F_i,H_i\}$  generate the action of the affine Lie algebra $\widehat{\mathfrak{sl}_4}$ on $\Omega^{\dotp\dotp\dotp\dotp}$
\end{proposition}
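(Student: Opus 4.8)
The plan is to repeat, in the richer $\mathbb{Z}^4$-graded setting, the strategy used to prove Proposition~\ref{sl2}. None of the operators $E_i,F_i,H_i$ involves differentiation along $M$ — they are built from Clifford multiplication by $\theta,\bar\theta$, from the two Hodge stars $*_1,*_2$, and from degree counting — so it suffices to verify all the defining relations fibrewise, at a single point $m\in M$. There the space of complex Clifford forms is identified, via the vielbein, with $\Lambda(\mathrm{V})\otimes\Lambda(\bar{\mathrm{V}})\otimes\Lambda(\mathrm{V}^*)\otimes\Lambda(\bar{\mathrm{V}}^*)$, i.e. with functions of four colours of Grassmann variables: $\xi_i,\bar\xi_i$ in the fibre (of multidegrees $p,\bar p$) and $\xi^i,\bar\xi^i$ in the base (of multidegrees $q,\bar q$), with $\theta=\xi_i\otimes\xi^i$ and $\bar\theta=\bar\xi_i\otimes\bar\xi^i$. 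Since $\mathrm{V}$ and $\bar{\mathrm{V}}$ are isotropic, the metric pairs $\mathrm{V}$ with $\bar{\mathrm{V}}$ and $\mathrm{V}^*$ with $\bar{\mathrm{V}}^*$; this coupling is what makes the four colours interact.

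The first step is to produce the explicit table of the twelve generators as Grassmann operators, generalising the table in the proof of Proposition~\ref{sl2}. The computation \eqref{Ex} applies verbatim and shows that $E_0$ and $F_2$ act as multiplication by $\xi_i\otimes\xi^i$ and $\bar\xi_i\otimes\bar\xi^i$, while $F_1$ and $E_3$ act as the matching internal multiplications $F_1\sim\partial_{\bar\xi_i}\otimes\xi^i$ and $E_3\sim\partial_{\xi_i}\otimes\bar\xi^i$ (each carrying a factor of $\eta$). The remaining four generators arise by conjugation with $*_1$ and $*_2$; rewriting these conjugations through the Grassmann--Fourier identities $\partial_{\xi_i}*f=*(\eta^{ij}\xi_j f)$ and $\xi_i*f=*(\eta_{ij}\partial_{\xi_j}f)$ turns each $*$-conjugate into a multiplication or a contraction with a metric factor. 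The outcome is that every $E_i,F_i$ is a fermionic bilinear coupling two of the four colours, and every $H_i$ is a degree-counting operator, hence diagonal and a linear function of $(p,\bar p,q,\bar q)$.

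With this table the relations split into an easy part and a hard part. The easy relations are immediate: $[H_i,H_j]=0$ because all $H_i$ are diagonal, and $[H_i,E_j]=A_{ij}E_j$, $[H_i,F_j]=-A_{ij}F_j$ because these reduce to evaluating the linear form $H_i$ on the known degree $\deg E_j$. Recording which simple roots $\alpha_i=\deg E_i$ share a colour — node $0$ touches $\mathrm{V},\mathrm{V}^*$, node $1$ touches $\bar{\mathrm{V}},\mathrm{V}^*$, node $2$ touches $\bar{\mathrm{V}},\bar{\mathrm{V}}^*$, node $3$ touches $\mathrm{V},\bar{\mathrm{V}}^*$ — shows that consecutive nodes in the order $0,1,2,3$ share exactly one colour, while the opposite pairs $(0,2)$ and $(1,3)$ are disjoint. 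Hence $A_{ij}$ is the Cartan matrix of the cyclic diagram on four nodes, which is precisely that of $A_3^{(1)}=\widehat{\mathfrak{sl}}_4$. The affine nature, as opposed to a finite $\mathfrak{sl}_4$ on a linear chain, is already visible from $\deg E_0+\deg E_1+\deg E_2+\deg E_3=(0,0,0,0)$: the fourth node closes the chain into a cycle, the minimal imaginary root $\delta=\sum_i\alpha_i$ acts in degree zero, and the corank-one degeneracy of $A$ (the column sums $\sum_i A_{ij}=0$) makes $H_0+H_1+H_2+H_3$ commute with every generator.

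The remaining relations genuinely use the bilinear form of the operators and carry the real work. For $[E_i,F_j]=\delta_{ij}H_i$ the diagonal case $i=j$ is the $\mathfrak{sl}_2$ relation inside a single node: commuting a creation-type bilinear with the matching annihilation-type one produces exactly the Euler operator $H_i$, as in Proposition~\ref{sl2}. In the off-diagonal case $i\ne j$ a short direct computation shows the commutator vanishes, the contributions from any shared colour cancelling against each other. The main obstacle is the Serre relations: for the two non-adjacent pairs one has $[E_i,E_j]=0$ (and likewise for the $F$'s) because the operators act on disjoint colours, but for each of the four adjacent pairs one must verify the cubic identity $(\operatorname{ad}E_i)^2E_j=0$ together with its $F$-analogue. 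These follow from a direct Grassmann computation using $\xi^2=0$ and the canonical anticommutators, yet correctly tracking the metric factors $\eta_{ij},\eta^{ij}$ and the signs incurred in passing between the four colours is the delicate point. Once all the Chevalley--Serre relations of $A_3^{(1)}$ are checked, the assignment $E_i,F_i,H_i$ defines a homomorphism from $\widehat{\mathfrak{sl}}_4$ to the operators on $\Omega^{\dotp\dotp\dotp\dotp}$, with the central element $H_0+H_1+H_2+H_3$ recording the level, and this proves the proposition.
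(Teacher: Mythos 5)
Your proof follows the paper's argument essentially verbatim: fibrewise reduction (justified, as in Proposition~\ref{sl2}, by the operators involving no differentiation along $M$), realization of all twelve generators as Grassmann bilinears in the four families of variables $\xi_i,\bar\xi_i,\xi^i,\bar\xi^i$ via the Fourier-transform conjugation identities, degree bookkeeping for the Cartan relations, and direct coordinate computation for $[E_i,F_j]$ and the Serre relations. The only cosmetic difference is that the paper settles the Serre relations by first computing the single brackets $[E_i,E_{i\pm1}]$ explicitly (they come out as even quadratic elements confined to one tensor factor, from which the cubic identities follow at once), whereas you assert the cubic identity $(\operatorname{ad}E_i)^2E_{i\pm1}=0$ by direct computation --- both are routine once the coordinate table is in place, and your extra observations (the cyclic Cartan matrix read off from shared colours, $\sum_i\deg E_i=0$, the central element $\sum_i H_i$) are consistent with the paper's formulas.
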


\begin{proof}
We need to check the following commutation relations:

\begin{gather}
[E_i,F_i]=H_i,\quad [H_i,E_i]=2E_i,\quad [H_i,F_i]=-2F_i,\label{EFH}\\
[E_i,[E_i,E_{i\pm 1}]]=0,\ [F_i,[F_i,F_{i\pm 1}]]=0,\label{Serre}\\
[E_i,F_j]=0 \mbox{ for } i\neq j,\label{EF}\\
[E_i,E_{i+2}]=0\label{EE0}
\end{gather}

Since the operators are not differential it is sufficient to check the relation in every point of the manifold $M$ in the algebra $\Lambda(\mathrm{V}^{\mathbb{R}})\otimes \Lambda((\mathrm{V}^{\mathbb{R}})^*) = \Lambda(\mathrm{V}+\bar{\mathrm{V}})\otimes \Lambda(\mathrm{V}^*+\bar{\mathrm{V}}^*)$.

Introducing a base $\xi_i$ of the space $\mathrm{V}'$ one can identify the algebra $\Lambda(\mathrm{V}+\bar{\mathrm{V}})$ with polynomials of $2n$ Grassmann variables $\xi_k$, $\bar{\xi}_k$ and the algebra $\Lambda(\mathrm{V}^*+\bar{\mathrm{V}}^*)$with polynomials of $2n$ Grassmann variables  $\xi^k$, $\bar{\xi}^k$. Taking into account that 
\begin{align*}
*^{-1}\frac{\partial}{\partial \xi_i}*=\eta^{i\bar{j}}\bar{\xi}_j, && *^{-1}\frac{\partial}{\partial \bar{\xi}_i}*=\eta^{\bar{i}j}\xi_j, && *^{-1}\xi_i* =\eta_{\bar{j}i} \frac{\partial}{\partial \bar{\xi}_j}, && *^{-1}\bar{\xi}_i* =\eta_{j\bar{i}} \frac{\partial}{\partial \xi_j},
\end{align*}
and analogously for the dual variables one can write the expressions for the generators
$\{E_i,F_i,H_i\}$ in the form.
\begin{align*}
E_0&=\xi_i\otimes \xi^i,&
E_1&=\bar{\xi_i}\otimes \eta^{j\bar{i}}\frac{\partial}{\partial \xi^j}&
E_2&=\frac{\partial}{\partial \bar{\xi_i}}\otimes \frac{\partial}{\partial \bar{\xi^i}}&
E_3&=\eta_{j\bar{i}}\frac{\partial}{\partial \xi_j}\otimes \bar{\xi}^i,\\
F_0&=\frac{\partial}{\partial \xi_i}\otimes \frac{\partial}{\partial \xi_i},&
F_1&=\eta_{i\bar{j}}\frac{\partial}{\partial\bar{\xi}_j}\otimes \xi^i,&
F_2&=\bar{\xi}_i\otimes \bar{\xi}^i,&
F_3&=\xi_i\otimes \eta^{i\bar{j}}\frac{\partial}{\partial \bar{\xi}^j}.
\end{align*}
The relations (\ref{EFH}), (\ref{EF}) and (\ref{EE0}) are obvious from the expressions of the generators in coordinates. To prove (\ref{Serre}) compute the commutators:
\begin{align*}
[E_0,E_1]&=\eta^{i\bar{j}}\,\xi_i\bar{\xi}_j\otimes 1,& [E_1,E_2]&=1\otimes\eta^{i\bar{j}}\,\frac{\partial^2}{\partial \xi^i\partial \bar{\xi}^j},\\
[E_2,E_3]&=-\eta_{i\bar{j}}\,\frac{\partial^2}{\partial \xi_i\partial \bar{\xi}_j}\otimes 1,&
[E_3,E_0]&=-1\otimes\eta_{i\bar{j}}\,\xi^i\bar{\xi}^j,\\
[F_0,F_1]&=\eta_{i\bar{j}}\frac{\partial^2}{\partial \xi_i\partial \bar{\xi}_j}\otimes 1,&
[F_1,F_2]&=1\otimes \eta_{i\bar{j}}\,\xi^i\bar{\xi}^j,\\
[F_2,F_3]&=-\eta^{i\bar{j}}\,\xi_i\bar{\xi}_j \otimes 1,&
[F_3,F_0]&=-1\otimes \eta^{i\bar{j}}\,\frac{\partial^2}{\partial \xi^i\partial \bar{\xi}^j},\\
\end{align*}
\begin{align*}
[E_0,F_0]&=\xi_i\frac{\partial}{\partial \xi_i}\otimes 1+1\otimes \xi_i\frac{\partial}{\partial \xi_i}-n=p'+q'-n\\
[E_1,F_1]&=\bar{\xi}_i\frac{\partial}{\partial \bar{\xi}_i}\otimes 1 - 1\otimes \xi_i\frac{\partial}{\partial \xi_i}=p''-q'\\
[E_2,F_2]&=n-\bar{\xi}_i\frac{\partial}{\partial \bar{\xi}_i}\otimes 1+1\otimes \bar{\xi}_i\frac{\partial}{\partial \bar{\xi}_i}=n-p''-q''\\
[E_3,F_3]&=1\otimes \bar{\xi}_i\frac{\partial}{\partial \bar{\xi}_i} -  \xi_i\frac{\partial}{\partial \xi_i}\otimes 1=q''-p'
\end{align*}
The relations (\ref{Serre}) follow. 
\end{proof}

Observe that the embedding $\mathfrak{sl}_2\times \mathfrak{sl}_2\to  \widehat{\mathfrak{sl}}_4$ is given by $E=E_0+F_2$, $E'=E_1+F_3$, $F=F_0+E_2$, $F'=F_1+E_3$.

\subsection{K\"ahler condition.}
The Lie algebra of the unitary group (the group preserving both the complex structure $J$ and the scalar product $\eta$) is embedded into the Clifford algebra as $\cl^{11}(\mathrm{V}^\mathbb{C})$. The connection preserving unitary structure is therefore a form $\omega^{\mathbb{R}}=\omega+\bar{\omega}$ with $\omega\in \Omega^{1110}$ and $\bar{\omega}\in \Omega^{1101}$. 

Observe that $w=\theta\bar{\theta}-\bar{\theta}\theta\in \Omega^{0011}$ is just an ordinary (1,1)-form. 
\begin{proposition}
A torsion-free connection preserving the metric and the complex structure exists if and only if the form $w$ is closed. In this case such connection is unique.
\end{proposition}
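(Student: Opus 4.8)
The plan is to expand the torsion $t^{\mathbb{R}}=d\theta^{\mathbb{R}}+\omega^{\mathbb{R}}\theta^{\mathbb{R}}+\theta^{\mathbb{R}}\omega^{\mathbb{R}}$ of a candidate unitary connection $\omega^{\mathbb{R}}=\omega+\bar\omega$, with $\omega\in\Omega^{1110}$ and $\bar\omega\in\Omega^{1101}$, into its $\mathbb{Z}^4$-homogeneous pieces and solve the resulting linear system. First I would record the four symmetrized Clifford products, using that $\omega,\bar\omega$ have odd total parity so that the degree-raising contributions cancel exactly as in the real case:
$$\theta\omega+\omega\theta=2F_1\omega\in\Omega^{1020},\quad \bar\theta\omega+\omega\bar\theta=2E_3\omega\in\Omega^{0111},$$
together with the conjugates $\theta\bar\omega+\bar\omega\theta=2F_1\bar\omega\in\Omega^{1011}$ and $\bar\theta\bar\omega+\bar\omega\bar\theta=2E_3\bar\omega\in\Omega^{0102}$. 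Since integrability kills the $\Omega^{1002}$ and $\Omega^{0120}$ Nijenhuis parts of $d\theta$ and $d\bar\theta$, and $\partial\theta\in\Omega^{1020}$, $\bar\partial\theta\in\Omega^{1011}$, $\partial\bar\theta\in\Omega^{0111}$, $\bar\partial\bar\theta\in\Omega^{0102}$, the torsion splits into four components $T_{1020}=\partial\theta+2F_1\omega$, $T_{0111}=\partial\bar\theta+2E_3\omega$, and their conjugates $T_{1011},T_{0102}$.

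Next I would check that $E_3\colon\Omega^{1110}\to\Omega^{0111}$ and $F_1\colon\Omega^{1101}\to\Omega^{1011}$ are isomorphisms, both spaces having dimension $n^{3}$. This is transparent in the Grassmann model: $E_3=\eta_{j\bar i}\tfrac{\partial}{\partial\xi_j}\otimes\bar\xi^i$ contracts the single $\mathrm V$-index against the nondegenerate $\eta$ and wedges an antiholomorphic covector, hence is bijective onto $\Omega^{0111}$ (alternatively one invokes the $\mathfrak{sl}_2$-argument of property~9 for the affine generators). Consequently $T_{0111}=0$ and $T_{1011}=0$ determine $\omega$ and $\bar\omega$ uniquely, and since $\overline{T_{0111}}=T_{1011}$ the two determinations are automatically compatible with the reality condition $\bar\omega=\overline{\omega}$. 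This already yields uniqueness: any torsion-free unitary connection must coincide with this one.

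For existence it remains to understand the two surviving equations $T_{1020}=0$ and $T_{0102}=0$, which are constraints on $\theta$ alone. Here is the crux. Writing $\partial\theta=T_{1020}-(\theta\omega+\omega\theta)$ and substituting the \emph{defining} relation $\partial\bar\theta=-(\bar\theta\omega+\omega\bar\theta)$ into
$$\partial w=(\partial\theta)\bar\theta-\theta\,\partial\bar\theta-(\partial\bar\theta)\,\theta+\bar\theta\,\partial\theta,$$
four of the eight resulting $\omega$-terms cancel in two pairs and the remaining four assemble into $w\omega-\omega w=0$, which vanishes because $w\in\Omega^{0011}$ is fiber-scalar of even form-degree and hence central. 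What survives is $\partial w=T_{1020}\bar\theta+\bar\theta T_{1020}=2E_3T_{1020}$. As $E_3$ is injective on $\Omega^{1020}$, this gives $\partial w=0\iff T_{1020}=0$; conjugating, $\bar\partial w=0\iff T_{0102}=0$. Since $w$ is purely imaginary these two are equivalent, so the torsion vanishes if and only if $dw=0$.

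The main obstacle I anticipate is the central identity $\partial w=2E_3T_{1020}$: it requires careful bookkeeping of the Koszul signs when differentiating the Clifford product $\theta\bar\theta-\bar\theta\theta$, and one must substitute exactly the equation that \emph{defines} $\omega$ (namely $T_{0111}=0$) rather than the constraint, so that the derivation remains an identity and does not presuppose what is being proved. A secondary point to pin down rigorously is the isomorphism and injectivity statements for $E_3$ and $F_1$ on the relevant weight spaces, which I would settle from the explicit Grassmann expressions of the generators.
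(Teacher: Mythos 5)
Your proof is correct and follows essentially the same route as the paper: the same decomposition of $t^{\mathbb{R}}$ into the four $\mathbb{Z}^4$-homogeneous components, the same determination of $\omega$ (and $\bar\omega$) from the invertibility of $E_3$ (resp.\ $F_1$), and the same key identity --- your pairwise cancellation together with the centrality of $w$ is just the expanded form of the paper's computation $\partial w=2E_3\partial\theta-2F_1\partial\bar\theta$ combined with $[E_3,F_1]=0$, yielding $\partial w=2E_3 t^{1020}$ once $t^{0111}=0$ is imposed. The only cosmetic differences are that you substitute the defining equation for $\omega$ before differentiating rather than stating the identity $E_3t^{1020}-F_1t^{0111}=\partial w/2$ unconditionally, and that you handle the antiholomorphic half by conjugation and reality of $w$ where the paper simply says the argument is analogous.
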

\begin{proof}
The torsion $t^{\mathbb{R}}=(\partial+\bar{\partial})(\omega+\bar{\omega})+(\theta+\bar{\theta})(\omega+\bar{\omega})+(\omega+\bar{\omega})(\theta+\bar{\theta})$ decomposes into four components 
\begin{align*}
t^{1020}&=\partial\theta+\omega\theta+\theta\omega= \partial\theta+2F_1\omega, &
t^{0111}&=\partial\bar{\theta}+\omega\bar{\theta}+\bar{\theta}\omega= \partial\bar{\theta}+ 2E_3 \omega,\\
t^{0102}&=\bar{\partial}\bar{\theta}+\bar{\omega}\bar{\theta}+\bar{\theta}\bar{\omega}= \bar{\partial}\bar{\theta}+2E_3\bar{\omega}, &
t^{1011}&=\bar{\partial}\theta+\bar{\omega}\theta+\theta\bar{\omega}= \bar{\partial}\theta+2F_1\bar{\omega}, 
\end{align*}
and vanishing of $t^{\mathbb{R}}$  is equivalent to vanishing of each of these components. It implies that
$$0=E_3t^{1020}-F_1t^{0111}=E_3\partial\theta-2E_3F_1\omega-F_1\partial\bar{\theta} -2F_1E_3 \omega=\partial w/2.$$
The last equality is a consequence of
$$ \partial w=(\partial\theta)\bar{\theta}+\bar{\theta}(\partial\theta)-\theta(\partial\bar{\theta})-(\partial\bar{\theta})\theta=2E_3 \partial\theta-2F_1 \partial\bar{\theta}$$
and of the relation $[F_1E_3]=0$. The proof that $\bar{\partial}w=0$ is analogous and thus the form $w$ is necessarily closed.

Conversely, the condition $dw=0$, the relation $t^{0111}=0$ and the invertibility of $E_3:\Omega^{1110}\to \Omega^{0111}$ imply that the condition $t^{1020}=0$ is also satisfied. Indeed the condition $t^{0111}=0$ implies $\omega= -\frac12 E_3^{-1}\partial\bar{\theta}$. Substituting it into the expression for $t^{1020}$ one gets
$$t^{1020}= \partial\theta-F_1E_3^{-1}\partial\bar{\theta}=E_3^{-1}(E_3\partial\theta-F_1\partial\bar{\theta})=\frac12E_3^{-1}\partial w=0$$.
\end{proof} 

\subsection{Bianchi identity in the K\"ahler case.} 
In the Kahler case  the first Bianchi identity implies that only the component $R\in \Omega^{1111}$ of the curvature $R^\mathbb{R}$ is nonzero and that it is invariant under a subalgebra $\widehat{\mathfrak{sl}}_2\times \widehat{\mathfrak{sl}}_2$ of $\widehat{\mathfrak{sl}}_4$.
\begin{proposition}
The curvature is given by $R^{\mathbb{R}}=R=\partial\bar{\omega}+\bar{\partial}\omega+\omega\bar{\omega}+\omega\bar{\omega}\in \Omega^{1111}$ It is invariant under the subalgebra or $\hat{\mathfrak{sl}}_4$ generated by $E_3,F_3,H_3$ and $E_1,F_1,H_1$.
\end{proposition}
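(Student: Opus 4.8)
The plan is to first determine which $\mathbb{Z}^4$-graded components the curvature can have, and then to use the first Bianchi identity, inherited from the real case, both to annihilate all but the $\Omega^{1111}$ component and to force that component to be a singlet. Throughout I write $R^{p\bar pq\bar q}$ for the $\Omega^{p\bar pq\bar q}$-component of $R^{\mathbb{R}}$.

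First I would compute $R^{\mathbb{R}}=d\omega^{\mathbb{R}}+(\omega^{\mathbb{R}})^2$ with $\omega^{\mathbb{R}}=\omega+\bar\omega$, $\omega\in\Omega^{1110}$, $\bar\omega\in\Omega^{1101}$ and $d=\partial+\bar\partial$. Since $\omega^{\mathbb{R}}$ is a $\cl^{11}$-valued $1$-form, $(\omega^{\mathbb{R}})^2=\tfrac12[\omega^{\mathbb{R}},\omega^{\mathbb{R}}]$ is again $\cl^{11}$-valued (the Clifford degree-$0$ and degree-$4$ parts of a product of two $\cl^{11}$ elements cancel in the graded commutator), and the same holds for $d\omega^{\mathbb{R}}$. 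Hence $R^{\mathbb{R}}$ is $\cl^{11}$-valued of form degree two, so its only possible components lie in $\Omega^{1120}$, $\Omega^{1111}$ and $\Omega^{1102}$. Collecting terms by degree gives
$$R^{\mathbb{R}}=\underbrace{(\partial\omega+\omega^2)}_{\in\Omega^{1120}}+\underbrace{(\bar\partial\omega+\partial\bar\omega+\omega\bar\omega+\bar\omega\omega)}_{\in\Omega^{1111}}+\underbrace{(\bar\partial\bar\omega+\bar\omega^2)}_{\in\Omega^{1102}},$$
so the $\Omega^{1111}$-component is exactly the claimed curvature $R$. It remains to show that the two outer components vanish and that $R$ is a singlet.

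Next I would invoke the first Bianchi identity. The Kähler condition makes $(\theta^{\mathbb{R}},\omega^{\mathbb{R}})$ torsionless, so the argument of the Bianchi subsection applies verbatim in the real bigrading: $E'R^{\mathbb{R}}=0$, and together with $H'R^{\mathbb{R}}=0$ (valid since $R^{\mathbb{R}}\in\Omega^{22}$ in the real bigrading) this yields $F'R^{\mathbb{R}}=0$. Using the embedding $E'=E_1+F_3$, $F'=F_1+E_3$ and the known degrees of $E_1,F_1,E_3,F_3$, each identity splits degree by degree into independent equations, because no two of the resulting terms land in the same graded piece. Reading them off one finds in particular $E_1R^{1120}=F_1R^{1120}=0$, together with $E_1R^{1102}=F_1R^{1102}=0$ (the first being automatic, as $E_1$ would push $q$ below $0$), and $E_1R=F_1R=E_3R=F_3R=0$ for $R=R^{1111}$.

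Finally the Ševera-style weight argument finishes both assertions simultaneously. Whenever $E_iv=F_iv=0$ one has $H_iv=[E_i,F_i]v=0$; but by the commutators of the previous proposition $R^{1120}$ has $H_1$-weight $\bar p-q=-1$ and $R^{1102}$ has $H_1$-weight $\bar p-q=+1$, both nonzero, so $R^{1120}=R^{1102}=0$ and $R^{\mathbb{R}}=R\in\Omega^{1111}$. On $\Omega^{1111}$ the weights $H_1=\bar p-q$ and $H_3=\bar q-p$ both vanish, and $R$ is annihilated by $E_1,F_1,E_3,F_3$; since nodes $1$ and $3$ commute (relation $[E_i,E_{i+2}]=0$), $R$ is a trivial vector for the $\mathfrak{sl}_2\times\mathfrak{sl}_2$ generated by $\{E_1,F_1,H_1\}$ and $\{E_3,F_3,H_3\}$, which is the asserted invariance. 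The only delicate point is the $\mathbb{Z}^4$-degree bookkeeping: one must verify that the decompositions of $E'R^{\mathbb{R}}$ and $F'R^{\mathbb{R}}$ separate cleanly into distinct graded pieces, and use the weight conventions coming from the computed commutators rather than the shorthand in the statement; everything else is the same representation-theoretic mechanism that produced the Bianchi identity in the real setting.
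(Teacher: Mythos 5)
Your proof is correct, but it reaches the conclusion by a genuinely different mechanism than the paper's. The paper works directly in the complex grading and differentiates torsion components afresh: it kills $R^{1120}=\partial\omega+\omega^2$ via $0=\partial t^{0111}=-2E_3(\partial\omega+\omega^2)$ together with the bijectivity of $E_3$ on $\Omega^{1120}$ (where $H_3=-1$), obtains $E_3R=0$ from $0=\bar\partial t^{1020}+\partial t^{1011}=-2E_3R$, and only then uses the weight-zero $\mathfrak{sl}_2$ argument to get $F_3R=0$, with node $1$ treated analogously. You perform no new differentiation at all: you recycle the real first Bianchi identity $E'R^{\mathbb{R}}=0$ (plus $H'R^{\mathbb{R}}=0$, hence $F'R^{\mathbb{R}}=0$ by the same representation-theoretic step as in the real case) and let the embedding $E'=E_1+F_3$, $F'=F_1+E_3$ together with $\mathbb{Z}^4$-degree bookkeeping distribute these into component equations; the outer components then die by the weight argument at node $1$, since $E_1$ and $F_1$ annihilate $R^{1120}$ and $R^{1102}$ while $H_1=[E_1,F_1]$ acts there by $-1$ and $+1$ respectively. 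I checked the delicate point you flagged: the nontrivial graded pieces of $E'R^{\mathbb{R}}$ land in degrees $(1,2,1,0)$, $(1,2,0,1)$, $(2,1,1,0)$, $(2,1,0,1)$ and those of $F'R^{\mathbb{R}}$ in $(1,0,3,0)$, $(0,1,2,1)$, $(1,0,2,1)$, $(0,1,1,2)$, $(1,0,1,2)$, $(0,1,0,3)$, all pairwise distinct, so the splitting is clean; and you were right to trust the computed commutators over the displayed list of $H_i$, which is internally inconsistent (the computed $[E_i,F_i]$ sum to zero, as they must for a level-zero affine action, while the listed weights do not sum to a constant). As for what each route buys: the paper's computation is self-contained in the complex grading and yields the explicit intermediate identities $\partial\omega+\omega^2=0$ and $\bar\partial\bar\omega+\bar\omega^2=0$, of independent interest; your route derives everything from the already-proved real statement by pure weight and degree reasoning, arguably closer to the index-free spirit of the paper, and incidentally delivers $F_3R=0$ directly as a graded piece of $E'R^{\mathbb{R}}=0$, where the paper needs a second application of the $\mathfrak{sl}_2$ argument.
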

\begin{proof} The curvature $R^\mathbb{R}=(\partial+\bar{\partial})(\omega+\bar{\omega}) + (\omega+\bar{\omega})^2$ decomposes into three components $(\partial\omega+\omega^2)\in \Omega^{1120}$, $\bar{\partial}\bar{\omega}+\bar{\omega}^2\in \Omega^{1102}$ and $\partial\bar{\omega}+\bar{\partial}\omega+\omega\bar{\omega}+\omega\bar{\omega}\in \Omega^{1111}$.
Prove first that $\partial\omega+\omega^2=0$. Indeed
\begin{equation*}
\begin{split}
0=&\partial t^{0111}= \partial\omega\bar{\theta}-\bar{\theta}\partial\omega-\omega\partial\bar{\theta}+\partial\bar{\theta}\omega=\\ =& (\partial \omega +\omega^2)\bar{\theta}-\bar{\theta}(\partial\omega+\omega^2)=-2E_3 (\partial\omega+\omega^2)
\end{split}
\end{equation*}
On the other hand since $H_3|_{\Omega^{1120}}=-1$ the operator $E_3$ is bijective in this degree and therefore $\partial\omega+\omega^2=0$. The identity $\bar{\partial}\bar{\omega}+\bar{\omega}^2=0$ can be proven analogously.

Similarly $0=\bar{\partial}t^{1020}+\partial t^{1011}=R\theta-\theta R=-2E_3R$. On the other hand $H_3R=0$ since $H_3|_{\Omega^{1111}}=0$ and therefore $F_3R=0$. Invariance under $E_1,F_1,H_1$ can be proven analogously.
\end{proof}

\subsection{Einstein equation in the K\"ahler case.}
In the K\"ahler case the Einstein equation is equivalent to the equations $F_0R=0$ and $E_2R=0$. In four real dimension it is equivalent to the full $\widehat{\mathfrak{sl}}_4$ invariance of the curvature. Indeed, the Einstein equation reads as $FR^\mathbb{R}=0$. In the complex case it amounts to $(F_0+E_2)R=0$. Since the two terms have different gradings this equation is equivalent to $F_0R=0$ and $E_2R=0$. Since in dimension four $H_0R=H_2R=0$ it implies together with the Bianchi identities that all generators of $\widehat{\mathfrak{sl}}_4$ annihilate the curvature.

\section{Conclusion.}
First of all we hope that the developed formalism allows to simplify learning Riemann differential geometry for students as well as to better understand the logic of the theory. Emergence of the affine group $\widehat{\mathfrak{sl}}_4$ seems mysterious for us and requires better understanding of its consequences. In the forthcoming version of the paper we plan to apply the construction to more elaborated structures such as hyperk\"ahler, $G_2$ and to supergravity. The main idea of the paper is presented in the master thesis of the second author defended at the University of Strasbourg 19/6/2017.

\begin{comment}
\subsection{Diffeomorphism and gauge Lie algebras action.}
The gauge group Lie algebra coincides with $\Omega^{20}$ with respect to the commutator. The action of an element $\varepsilon$ of this algebra is given by
$$ \theta\mapsto \theta\varepsilon-\varepsilon\theta,\ \omega\mapsto d\varepsilon + \omega\varepsilon-\varepsilon\omega.$$

The algebra of diffeomorphisms is just the algebra of vector fields on $M$ with the standard action on the forms:
$$ \theta \mapsto L_v\theta,\ \omega\mapsto L_v\omega$$.

Observe that the action of a vector field $v$ together with the infinitesimal gauge transformation $\varepsilon=-i_v\omega$ gives a particularly simple action
$$\theta \mapsto -\nabla i_v\theta=-di_v\theta-\theta i_v\omega+i_v\omega\cdot \theta,\quad
\omega \mapsto i_v R$$

The corresponding Hamiltonians are given by $H_\varepsilon(\theta, \pi)=\str\int_{\partial M} \varepsilon J(\theta, \pi)$ and $H_v(\theta, \pi)=\str\int_{\partial M} i_v\theta T(\theta, \pi)$ where $J\in \Omega^{n-2,n-1}$ is the gauge current and $T(\theta, \pi)\in \Omega^{n-1,n-1}$ is the energy-momentum tensor.

\begin{proposition}
$$J(\theta, \pi)=\nabla \theta^{n-2}=d \theta^{n-2}+\omega\theta^{n-2}-(-1)^{n-2}\theta^{n-2}\omega$$
$$T(\theta,\pi)=???$$
\end{proposition}

\subsection{Matter fields.}

Cosmological constant

$$S^{cc}=\str\int_M \theta^n$$

Scalar field

$$S(\phi)=\str\int_M ((F'd\phi)^2+m\phi^2)\theta^n$$

Gauge field

$$S(A)=\str \int_M \tr(F'(dA+A^2))^2\theta^{n-2}$$

Spinor field

$$S(\psi)=\str\int_M (\psi^\dagger (d+\omega+m\theta)\psi)\theta^{n-3}$$

\end{comment}

\end{document}